\newtheorem{thm}{Theorem}[section]
\newtheorem{prop}[thm]{Proposition}
\newtheorem{lemma}[thm]{Lemma}
\newtheorem{rmk}[thm]{Remark}
\newtheorem{cor}[thm]{Corollary}
\newcommand{\cat}[1]{\mathcal #1}
\newcommand{\hs}[1]{\hspace{#1pt}}
\newcommand{\vs}[1]{\vspace{#1pt}}
\newcommand{\tab}{\hs{10}}
\newcommand{\vtab}{\vs{15}}
\newcommand{\id}{\mathbb{I}}
\newcommand{\defeq}{\stackrel{\rm def}{=}}
\renewcommand{\subset}{\subseteq}
\renewcommand{\supset}{\supseteq}
\newcommand{\Z}{\mathbb{Z}}
\newcommand{\iso}{\ \tilde{\to}\ }
\newcommand{\squarediagram}[4]{
\begin{center}
$\begin{CD}
#1    @>>> #2   \\
@VVV       @VVV \\
#3    @>>> #4
\end{CD}$
\end{center}
}
\newcommand{\squarediagramword}[8]{
\begin{center}
$\begin{CD}
#1      @>#5>>  #2     \\
@V#6VV          @VV#7V \\
#3      @>#8>>  #4
\end{CD}$
\end{center}
}
\newcommand{\repn}{representation}
\newcommand{\ga}{\alpha}
\newcommand{\claimend}{\tab $\triangle$}
\newcommand{\Q}{\mathbb{Q}}
\newcommand{\C}{\mathbb{C}}
\newcommand{\pt}{{\rm Spec}\, k}
\newcommand{\morp}{morphism}
\newcommand{\wrt}{with respect to}
\newcommand{\withoutlog}{ithout loss of generality}
\newcommand{\longto}{\longrightarrow}
\newcommand{\spec}[1]{{\rm Spec}\, #1}
\newenvironment{statementslist}{\begin{tabular}[t]{p{15pt}p{380pt}}}{\end{tabular}}
\newcommand{\kernel}[1]{{\rm kernel}\, #1}
\newcommand{\homo}{homomorphism}
\renewcommand{\char}[1]{{\rm char\,#1}}
\newcommand{\reduced}[1]{#1_{\rm red}}
\newcommand{\rank}[1]{{\rm rk\,#1}}
\newcommand{\ann}[1]{{\rm Ann\,}#1}
\newcommand{\nil}[1]{{\rm Nil}(#1)}
\newcommand{\equi}{equivariant}
\newcommand{\lazard}{\mathbb{L}}
\newcommand{\rou}{root\ of\ unity}
\newcommand{\fgl}{formal group law}
\begin{document}

\title{On the Equivariant Lazard ring and Tom Dieck's equivariant cobordism ring}
\author{C. L. Liu}
\begin{abstract}
For a torus $G$ of rank $r = 1$, we showed that the canonical ring \homo\ $\lazard_G \to MU_G$, where $\lazard_G$ is the \equi\ Lazard ring and $MU_G$ is Tom Dieck's \equi\ cobordism ring, is surjective. We also showed that the completion map $MU_G \to \widehat{MU}_G \cong MU(BG)$ is injective. Moreover, we showed that the same results hold if we assume a certain algebraic property holds in $\lazard_G$ when $r \geq 2$.
\end{abstract}

\address{Department of Mathematics, University of Duisburg Essen, Thea-Leymann-Strasse 9, Essen, 45127, Germany} 
\email{chun.liu@uni-due.de}
\date{\today}

\maketitle

\medskip

\medskip

\section{Introduction}

Let $G$ be a compact abelian Lie group. In his paper \cite{complex cobor}, Tom Dieck defined an \equi\ cobordism theory $MU^*_{G}(-)$. While it has many nice properties, for instance, there is a Conner-Floyd natural transformation
$$MU^*_{G}(-) \to K^*_{G}(-),$$
it is rather mysterious from the computational point of view. For example, even when $G$ is a cyclic group of order $n > 2$, the ring structure of $MU_{G}$ is not clear, from the algebraic point of view (When $n=2$, see \cite{complex cobor ring when order two} for an explicit algebraic description of $MU_{G}$). It is worth mentioning that although a description of $MU_G$ is given in Sinha's paper \cite{computation of MU}, it depends on an implicit choice of a basis of $MU_G$ as a free $MU$-module.

In their paper \cite{equi FGL}, Cole, Greenlees and Kriz introduced a notion called \equi\ \fgl\ and showed that there is a representing ring, called \equi\ Lazard ring. By Corollary 14.3 in \cite{equi FGL}, this ring $\lazard_{G}(F)$ (relative to a complete $G$-flag $F$) has a very explicit, algebraic description. Moreover, there is a canonical ring \homo
$$\nu_{G} : \lazard_{G}(F) \to MU_{G}.$$
As an analogue of the non-\equi\ case, Greenlees conjectured that this ring \homo\ is an isomorphism (see section 13 in \cite{equi FGL 2}). Since the general case will follow easily from the case when $G$ is a torus, we will focus on this case. 

There are two parts in this paper. Part 1 (section \ref{sec canonical ring homo}) is devoted to the investigation of the injectivity and surjectivity of the canonical ring \homo\ $\nu_G$. In part 2 (section \ref{sec completion map}), we consider the completion map
$$MU_G \to \widehat{MU}_G \cong MU(BG)$$
\wrt\ the ideal generated by the Euler classes. To be more precise, we are interested in showing its injectivity. For both parts, we have satisfactory results when $\rank{G} = 1$ (see Corollary \ref{cor greenlee conj for rank 1} and Theorem \ref{thm completion map}). When $\rank{G} \geq 2$, 
we manage to establish the same results, under the assumption that a certain algebraic property holds for $\lazard_G(F)$ (see Theorem \ref{thm lazard to MU iso}, \ref{thm completion map}).

\bigskip
\bigskip

\section{Notations and assumptions}
\label{sect notation}

In this paper, $G$ is a split torus of rank $r$. Hence, any $G$-\repn\ over $\C$ can be written as direct sum of 1-dimensional $G$-\repn s. We also fix a complete $G$-universe $\cat{U}$ and a complete $G$-flag $F$ given by 
$$0 = V^0 \subset V^1 \subset V^2 \subset \cdots.$$
We denote the 1-dimensional $G$-characters $V^i/V^{i-1}$ by $\ga_i$. All $G$-characters are 1-dimensional unless stated otherwise. For simplicity, we will denote the \equi\ Lazard ring relative to the flag $F$ by $\lazard_G$.

\bigskip
\bigskip

\section{The canonical ring homomorphism $\nu$} 
\label{sec canonical ring homo}

In this section, we will investigate the injectivity and surjectivity of the canonical ring \homo
$$\nu_G : \lazard_G \to MU_G.$$

We will follow the notation in \cite{equi FGL}. For instance, we will denote the structure constants of $\lazard_G(F)$ by $b^{ij}_s, d(\alpha)^i_s, f^i_{st}$. Let $S_G \subset \lazard_G$ be the multiplicative set generated by the non-trivial Euler classes ($e(\beta) = d(\beta)^1_0$ where $\beta$ is non-trivial) and $K_{G,S}$ be the kernel of the localization map $\lazard_G \to S_G^{-1} \lazard_G$. First, we have the following result regarding the injectivity of the map $\nu_G $.

\begin{prop}
\label{prop lazard to MU inj}
The kernel of $\nu_G : \lazard_G \to MU_G$ is $K_{G,S}$.
\end{prop}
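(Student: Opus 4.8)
The plan is to show the two inclusions $K_{G,S} \subseteq \ker \nu_G$ and $\ker \nu_G \subseteq K_{G,S}$ separately, with the second inclusion being the substantial one. For the first inclusion, I would argue that the Euler classes $e(\beta)$ become invertible (or at least non-zero-divisors with the relevant localization property) in $MU_G$, so that the localization map $\lazard_G \to S_G^{-1}\lazard_G$ factors the relevant part of $\nu_G$; more precisely, Tom Dieck's $MU_G$ has the feature that localizing at the Euler classes recovers a geometric fixed-point type object, and in any case the key point is that $\nu_G$ sends $S_G$ into the non-zero-divisors of $MU_G$ — indeed into units after the appropriate localization — so any element killed by the localization $\lazard_G \to S_G^{-1}\lazard_G$ must already map to $0$ in $MU_G$. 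This gives $K_{G,S} \subseteq \ker \nu_G$ essentially formally.

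For the reverse inclusion, the strategy is to use the explicit algebraic description of $\lazard_G$ from \cite{equi FGL} (Corollary 14.3) together with the structure of $MU_G$ as established in \cite{equi FGL} / \cite{computation of MU}: namely that $S_G^{-1}\lazard_G \to S_G^{-1} MU_G$ is an isomorphism (this should be a consequence of the Cole--Greenlees--Kriz comparison, since after inverting Euler classes the equivariant formal group law becomes ``trivialized'' along the flag and both sides reduce to the same localized object built from the non-equivariant Lazard ring). Granting that, consider $x \in \ker \nu_G$. Then the image of $x$ in $S_G^{-1}\lazard_G$ maps to $0$ in $S_G^{-1} MU_G$, and since $S_G^{-1}\lazard_G \iso S_G^{-1} MU_G$, the image of $x$ in $S_G^{-1}\lazard_G$ is $0$; but that image being $0$ is exactly the statement $x \in K_{G,S}$. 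So the whole proposition reduces to the isomorphism $S_G^{-1}\nu_G : S_G^{-1}\lazard_G \iso S_G^{-1} MU_G$.

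Thus the main obstacle — and the heart of the proof — will be establishing that $S_G^{-1}\nu_G$ is an isomorphism. I expect this to go as follows: after inverting all non-trivial Euler classes, the equivariant formal group law over $S_G^{-1}\lazard_G$ splits, and one can use the flag $F$ (with its characters $\ga_i$) to identify $S_G^{-1}\lazard_G$ with a ring built by adjoining the coordinates $d(\ga_i)$-type elements to $S_G^{-1} \lazard$ (the localized non-equivariant Lazard ring), subject only to the relations forced by the formal group law; on the topological side, the localization theorem for $MU_G$ (Tom Dieck, or the comparison in \cite{equi FGL}) identifies $S_G^{-1} MU_G$ with the analogous localized object, since inverting Euler classes concentrates everything at the fixed points and $BG$-bundle data. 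Matching these two descriptions term by term, using that $\nu_G$ is compatible with the structure constants $b^{ij}_s, d(\ga)^i_s, f^i_{st}$ by construction, yields the isomorphism. The delicate point to get right is that one genuinely needs \emph{all} non-trivial Euler classes inverted (not just a single primitive one) for the splitting to be clean when $r \geq 1$, and that the identification of $S_G^{-1} MU_G$ with the localized Lazard-type ring is available unconditionally — this is where I would lean most heavily on the results quoted from \cite{equi FGL} and \cite{computation of MU}, and is the step most likely to require care in the write-up.
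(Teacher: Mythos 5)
Your proposal is correct and is essentially the paper's own proof: the inclusion $K_{G,S}\subseteq\ker\nu_G$ follows from the fact that the non-trivial Euler classes are regular (non-zero-divisors) in $MU_G$ (Theorem 1.2 in \cite{computation of MU}), and the reverse inclusion from the commutative square comparing $\lazard_G\to MU_G$ with their localizations, where $S_G^{-1}\nu_G : S_G^{-1}\lazard_G \to S_G^{-1}MU_G$ is an isomorphism. The only difference is that the step you single out as the ``heart of the proof''---establishing that $S_G^{-1}\nu_G$ is an isomorphism---is not re-proved in the paper but quoted directly as Proposition 13.2 in \cite{equi FGL 2}.
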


\begin{proof}
By Theorem 1.2 in \cite{computation of MU}, the non-trivial Euler classes in $MU_G$ are regular. Therefore, $\nu_G(K_{G,S})=0$. Consider the following commutative diagram :

\squarediagram{\lazard_G/K_{G,S}}{S_G^{-1} \lazard_G}{MU_G}{S_G^{-1}MU_G}

\noindent The top row is certainly injective and the right column is an iso\morp\ by Proposition 13.2 in \cite{equi FGL 2}. The result then follows.
\end{proof}

For the surjectivity of $\nu_G$, we have a satisfactory result when $r=1$.

\begin{prop}
\label{prop surj of map equi lazard to equi MU}
Suppose $G$ is a split torus of rank 1. Then 
$$\nu_G : \lazard_G \to MU_G$$
is surjective.
\end{prop}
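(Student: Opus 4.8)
The plan is to compare $\lazard_G$ and $MU_G$ after inverting the Euler classes, and then bootstrap surjectivity from the localization back to $MU_G$ itself, using the rank-$1$ structure of $MU_G$. By Proposition~13.2 in \cite{equi FGL 2} the localization $S_G^{-1}\nu_G : S_G^{-1}\lazard_G \to S_G^{-1}MU_G$ is an isomorphism, so every element of $MU_G$ becomes a $\nu_G$-image after multiplying by a product of non-trivial Euler classes. The point is therefore to show that, when $\rank G = 1$, the image of $\nu_G$ is already ``saturated'' in the sense that divisibility by an Euler class descends: if $e(\beta)\cdot x \in \image{\nu_G}$ for a non-trivial character $\beta$, then $x \in \image{\nu_G}$. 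Combined with the localization statement, an easy induction on the number of Euler-class factors needed to clear a denominator then gives surjectivity.

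To carry this out I would use Sinha's description of $MU_G$ as a free $MU$-module together with the explicit generators available in rank $1$. When $G = \torus$, the non-trivial characters are the powers $\beta^n$ ($n \neq 0$) of a fixed generator $\beta$, and $MU_G$ is built from the geometric fixed-point / Borel data in a way that makes the Euler classes $e(\beta^n)$ and the Euler classes' ``quotient'' generators explicit; the key input is that $MU_G$ is generated as a ring by the Euler classes $e(\beta^n)$ together with the classes coming from $MU \hookrightarrow MU_G$ and the classes $d(\beta^n)^i_s$ (equivalently, the coefficients in the expansion of the $n$-series of the equivariant formal group law). Since $\lazard_G$ is, by Corollary~14.3 in \cite{equi FGL}, generated precisely by the structure constants $b^{ij}_s, d(\alpha)^i_s, f^i_{st}$, it suffices to check that each ring generator of $MU_G$ appearing in Sinha's presentation is hit by one of these structure constants under $\nu_G$. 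For the Euler classes this is immediate from the definition of $\nu_G$; for the remaining generators one matches them with the $d(\alpha)^i_s$ using that in rank $1$ the flag $F$ is essentially forced (the characters $\ga_i$ run through the powers of $\beta$), so that the equivariant formal group law over $MU_G$ is recovered from the structure constants already present in $\lazard_G$.

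The main obstacle I anticipate is precisely the ring-generation statement for $MU_G$: Sinha's computation presents $MU_G$ as a \emph{free module}, not as a ring with named generators, so I will need to extract a ring-generating set from that module basis and the multiplicative structure, and then verify that this finite collection of generators lies in $\image{\nu_G}$. Concretely, the risk is that some module generator is \emph{not} a polynomial in Euler classes and $d$-constants but only becomes one after inverting an Euler class --- this is exactly where the saturation argument of the first paragraph is needed, and where the rank-$1$ hypothesis is essential (in higher rank the analogous statement is what forces the extra ``algebraic property'' assumption in Theorem~\ref{thm lazard to MU iso}). So the real content is: (i) identify a ring-generating set of $MU_G$ from Sinha's description; (ii) show each generator is, up to an Euler-class multiple, in $\image{\nu_G}$, using $S_G^{-1}\nu_G$ an isomorphism; (iii) prove the saturation/descent step that removes the Euler-class multiples, using regularity of Euler classes (Theorem~1.2 in \cite{computation of MU}) and the explicit rank-$1$ structure to control the quotients. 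Steps (i) and (iii) are where I expect the work to concentrate.
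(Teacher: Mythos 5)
Your first paragraph correctly identifies the reduction the paper also starts from: since $S_G^{-1}\nu_G$ is an isomorphism (Proposition 13.2 in \cite{equi FGL 2}) and Euler classes are regular in $MU_G$, every $a \in MU_G$ has $e(\beta^{n_1})\cdots e(\beta^{n_k})\,a \in \image{\nu_G}$, so everything hinges on a descent statement along Euler classes. But the route you propose for that descent has a genuine gap: you want to extract from Sinha's free-$MU$-module description a \emph{ring}-generating set of $MU_G$ consisting of Euler classes and classes matching the structure constants $d(\alpha)^i_s$, and then check these are hit by $\nu_G$. That statement is essentially equivalent to the surjectivity you are trying to prove (it is stated in the paper as a \emph{consequence} of surjectivity, in the remark following Corollary \ref{cor greenlee conj for higher rank}), and Sinha's module basis is not canonical nor visibly expressible in terms of Euler classes and $d$-classes, so step (i) of your plan is not an input you can appeal to --- it is the theorem. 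Likewise, regularity of Euler classes in $MU_G$ (Theorem 1.2 in \cite{computation of MU}) controls $MU_G$ itself, not the cokernel of $\nu_G$, so it does not by itself give your saturation property.

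The missing idea, which is how the paper closes exactly this gap, is homological rather than generator-based: set $M \defeq MU_G/\lazard_G$ and prove that no nonzero element of $M$ is killed by a power $e(\beta^n)$. For this one uses the two exact rows
$$\lazard_G \stackrel{e(\beta^n)}{\longto} \lazard_G \longto \lazard_{\Z/(n)} \longto 0, \qquad 0 \longto MU_G \stackrel{e(\beta^n)}{\longto} MU_G \longto MU_{\Z/(n)} \longto 0,$$
the first because $\lazard_G/(e(\beta^n)) \cong \lazard_{\Z/(n)}$ and the second by Sinha's theorem, and then the Snake Lemma: the obstruction to $e(\beta^n)$ acting injectively on $M$ is the kernel of $\nu_{\Z/(n)}$. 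For $n=1$ that kernel vanishes by Quillen's theorem ($\lazard \iso MU$); for $n \geq 2$ one does not know $\nu_{\Z/(n)}$ is injective, so one first localizes at the multiplicative set generated by $e(\beta),\ldots,e(\beta^{n-1})$ (already shown to act injectively on $M$ by induction), after which the right-hand column becomes an isomorphism by Proposition 13.2 in \cite{equi FGL 2}. This yields injectivity of $M \to S_n^{-1}M$ for all $n$, which contradicts the existence of $0 \neq a \in M$ with $e(\beta^n)a = 0$. Note also that this proves something weaker than your proposed saturation (descent only after inverting the smaller Euler classes), but it is all that is needed; your stronger saturation claim for every non-trivial character is not established by the paper and you should not expect to need it.
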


\begin{proof}
Consider the following commutative diagram :

\squarediagramword{\lazard_G}{S_G^{-1} \lazard_G}{MU_G}{S_G^{-1} MU_G}{}{\nu_G}{S_G^{-1}\nu_G}{}

\noindent By Proposition 13.2 in \cite{equi FGL 2}, the map $S_G^{-1}\nu_G$ is an iso\morp. Moreover, the bottom row is injective because Euler classes in $MU_G$ are regular (Theorem 1.2 in \cite{computation of MU}). So, for any element $a \in MU_G$, there exist some non-trivial $G$-characters $\gamma_i$ such that $(\prod_i e(\gamma_i)) a$ is in the image of $\nu_G$. Suppose $\nu_G$ is not surjective. Let $M \defeq MU_G / \lazard_G$. Then there exists an element $0 \neq a \in M$ such that $e(\beta^n) a = 0$, where $\beta \in G^*$ is a generator and $n$ is a positive integer. 

Consider the following commutative diagram :

\begin{center}
$\begin{CD}
@. \lazard_G @>e(\beta)>> \lazard_G @>>> \lazard @>>> 0 \\
@. @VVV @VVV @VVV @. \\
0 @>>> MU_G @>e(\beta)>> MU_G @>>> MU @>>> 0 
\end{CD}$
\end{center}

\noindent The top row is exact because $\lazard_G / (e(\beta)) \cong \lazard_{\{1\}} \cong \lazard$. By Theorem 1.2 in \cite{computation of MU}, the bottom row is exact. Furthermore, the right column is an isomorphism by Quillen's Theorem. By the Snake Lemma, the map $e(\beta) : M \to M$ is injective. Therefore, the localization map $M \to S_1^{-1} M$ is injective, where $S_m \subset \lazard_G$ denotes the multiplicative set generated by $e(\beta), \ldots, e(\beta^m)$.

Now, consider a similar commutative diagram with $e(\beta^2)$ instead of $e(\beta)$ and localize it \wrt\ $S_1$ :

\begin{center}
$\begin{CD}
@. S_1^{-1} \lazard_G @>e(\beta^2)>> S_1^{-1} \lazard_G @>>> S_1^{-1} \lazard_{\Z / (2)} @>>> 0 \\
@. @VVV @VVV @VVV @. \\
0 @>>> S_1^{-1} MU_G @>e(\beta^2)>> S_1^{-1} MU_G @>>> S_1^{-1} MU_{\Z / (2)} @>>> 0 
\end{CD}$
\end{center}

\noindent The rows are exact for the same reasons. By Proposition 13.2 in \cite{equi FGL 2}, the right column is an isomorphism. Again, by the Snake Lemma, the map $e(\beta^2) : S_1^{-1} M \to S_1^{-1} M$ is injective. Therefore, $M \to S_1^{-1} M \to S_2^{-1} M$ is injective. Inductively, $M \to S_n^{-1} M$ is injective. But $a \neq 0$ is in its kernel by our assumption. That draws a contradiction and we are done.
\end{proof}

\begin{cor}
\label{cor greenlee conj for rank 1}
Suppose $G$ is a split torus of rank 1 and $H \subset G$ is a closed subgroup.

\noindent \begin{statementslist}
{\rm (1)} & The canonical map $\nu_G : \lazard_G/K_{G,S} \to MU_G$ is an isomorphism. \\
{\rm (2)} & Let $p : \lazard_G \to \lazard_H$ be the canonical map. Then, $\nu_H : \lazard_H \to MU_H$ factors through $\lazard_H / p(K_{G,S})$ and $\nu_H : \lazard_H / p(K_{G,S}) \to MU_H$ is an isomorphism.
\end{statementslist}
\end{cor}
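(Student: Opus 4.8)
The plan is to deduce both statements from the two propositions just proved, together with the description of the Lazard ring and the localization machinery. For statement (1), I would combine Proposition \ref{prop lazard to MU inj} with Proposition \ref{prop surj of map equi lazard to equi MU}: the former identifies the kernel of $\nu_G$ with $K_{G,S}$, so $\nu_G$ descends to an injection $\lazard_G/K_{G,S} \embed MU_G$; the latter says this map is also surjective, hence an isomorphism. That part should be essentially immediate.

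For statement (2), the first task is to check that $\nu_H$ kills $p(K_{G,S})$, so that it factors through $\lazard_H/p(K_{G,S})$. Here I would use naturality: there is a commutative square relating $\nu_G, \nu_H$ and the restriction maps $\lazard_G \to \lazard_H$, $MU_G \to MU_H$ (the latter being restriction to the subgroup $H$). Since $\nu_G(K_{G,S}) = 0$, chasing the square gives $\nu_H(p(K_{G,S})) = 0$. Then one needs the induced map $\bar\nu_H : \lazard_H/p(K_{G,S}) \to MU_H$ to be an isomorphism. Surjectivity I would get by reducing to Proposition \ref{prop surj of map equi lazard to equi MU}: since $G$ has rank $1$, $H$ is either trivial, all of $G$, or a finite cyclic group $\Z/(n)$; in the first case $MU_H = MU$ and $\lazard_H = \lazard$ and surjectivity is classical, in the second it is exactly Proposition \ref{prop surj of map equi lazard to equi MU}, and for $H = \Z/(n)$ one can either invoke the analogue of the rank-$1$ argument or push it down from $G$. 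For injectivity, the point is that $p(K_{G,S})$ should already be the whole kernel of $\nu_H$; concretely, $K_{H,S}$ (the kernel of the localization of $\lazard_H$ at its non-trivial Euler classes) is generated by the images of the corresponding kernel upstairs, because the non-trivial Euler classes of $H$ are precisely images under $p$ of certain Euler classes of $G$ (the characters of $G$ whose restriction to $H$ is non-trivial), and the localization $S_H^{-1}\lazard_H$ is a quotient-localization compatible with $p$. Combining $\kernel{\nu_H} = K_{H,S} = p(K_{G,S})$ (using Proposition \ref{prop lazard to MU inj} applied to $H$) with surjectivity then yields the isomorphism.

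The main obstacle I anticipate is the identification $p(K_{G,S}) = K_{H,S}$ (equivalently, that $p$ carries the localization-kernel onto the localization-kernel). The inclusion $p(K_{G,S}) \subset K_{H,S}$ is the easy direction, following from compatibility of $p$ with the localization maps and the fact that non-trivial Euler classes of $G$ restrict to either non-trivial Euler classes or units in $S_H^{-1}\lazard_H$. The reverse inclusion requires knowing that every non-trivial Euler class of $H$ lifts to a non-trivial Euler class of $G$ modulo $p(K_{G,S})$, and that $\lazard_H$ with these classes inverted is the image of $S_G^{-1}\lazard_G$; this is where I would lean on the explicit structure-constant description of $\lazard_G(F)$ from \cite{equi FGL} and on Proposition 13.2 of \cite{equi FGL 2}, which already tells us $S_G^{-1}\nu_G$ and $S_H^{-1}\nu_H$ are isomorphisms, so that both localized rings are computed by $MU_G$-theory and the comparison is forced. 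If that identification turns out to need more care, an alternative is to bypass it: prove directly that $\bar\nu_H$ is injective by the same snake-lemma/localization induction used in Proposition \ref{prop surj of map equi lazard to equi MU}, now applied to $\lazard_H/p(K_{G,S})$ and $MU_H$, using that the Euler classes $e(\beta^k)$ act compatibly and that the relevant cokernel vanishes after inverting all of them.
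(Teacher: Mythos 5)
Part (1) and the factorization step of part (2) are fine and agree with the paper. The gap is in your injectivity argument for $\nu_H : \lazard_H/p(K_{G,S}) \to MU_H$. Your main route is to apply Proposition \ref{prop lazard to MU inj} to $H$ to get $\kernel{\nu_H} = K_{H,S}$ and then to identify $p(K_{G,S})$ with $K_{H,S}$. Neither step is available when $H$ is a proper finite cyclic subgroup (which is the only interesting case, since $H=G$ is part (1) and $H=\{1\}$ is Quillen). The proof of Proposition \ref{prop lazard to MU inj} rests on the regularity of non-trivial Euler classes in $MU_G$, which Sinha's theorem provides only for characters with non-trivial torus part; for $H$ finite the Euler classes in $MU_H$ are zero divisors (they annihilate transfers, and already $e(\beta^n)=e(\beta)(n+\cdots)=0$ shows the pathology), so one cannot even conclude $\nu_H(K_{H,S})=0$, let alone $\kernel{\nu_H}=K_{H,S}$. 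The identification $p(K_{G,S})=K_{H,S}$, which you yourself flag as the main obstacle, is never established: the sketch via structure constants and Proposition 13.2 of \cite{equi FGL 2} does not produce the reverse inclusion, and the corollary is deliberately stated with $p(K_{G,S})$ rather than $K_{H,S}$ precisely to sidestep this. Your fallback (rerun the localization/snake-lemma induction of Proposition \ref{prop surj of map equi lazard to equi MU} for $\lazard_H/p(K_{G,S})$ and $MU_H$) also does not go through as stated, because that induction needs short exact sequences of the form $0 \to MU_H \stackrel{e(\gamma)}{\to} MU_H \to MU_{H'} \to 0$, i.e.\ regularity of Euler classes in $MU_H$, which again fails for finite $H$.

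The paper closes this step without any statement about $K_{H,S}$: tensor the exact sequence $\lazard_G \stackrel{e(\beta^n)}{\to} \lazard_G \stackrel{p}{\to} \lazard_H \to 0$ with $\lazard_G/K_{G,S}$ to get the right-exact row $\lazard_G/K_{G,S} \stackrel{e(\beta^n)}{\to} \lazard_G/K_{G,S} \to \lazard_H/p(K_{G,S}) \to 0$, compare it via $\nu$ with Sinha's short exact sequence $0 \to MU_G \stackrel{e(\beta^n)}{\to} MU_G \to MU_H \to 0$ (regularity is used only for $e(\beta^n)$ in $MU_G$, where $G$ is the torus), and apply part (1) to the two left-hand columns together with the four/five lemma to conclude that the right-hand column is an isomorphism. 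If you want to salvage your write-up, replace the $K_{H,S}$ discussion by this diagram chase; the surjectivity you obtained by pushing down from $G$ is then also subsumed in the same argument.
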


\begin{proof}
Part (1) follows from Proposition \ref{prop lazard to MU inj} and \ref{prop surj of map equi lazard to equi MU}. For part (2), let $\beta \in G^*$ be a generator and $n$ be a generator of the kernel of $\Z \cong G^* \to H^*$. W\withoutlog, $n > 0$. Consider the following commutative diagram :

\begin{center}
$\begin{CD}
@. \lazard_G @>e(\beta^n)>> \lazard_G @>{p}>> \lazard_H @>>> 0 \\
@. @V{\nu_G}VV @V{\nu_G}VV @V{\nu_H}VV @. \\
0 @>>> MU_G @>e(\beta^n)>> MU_G @>>> MU_H @>>> 0 
\end{CD}$
\end{center}

\noindent Since $\nu_G(K_{G,S}) = 0$, we have $\nu_H \circ p(K_{G,S}) = 0$. So $\nu_H$ factors through $\lazard_H / p(K_{G,S})$. Apply $\otimes_{\lazard_G} (\lazard_G / K_{G,S})$ to the top row, we have :

\begin{center}
$\begin{CD}
@. \lazard_G/K_{G,S} @>e(\beta^n)>> \lazard_G/K_{G,S} @>{p}>> \lazard_H/p(K_{G,S}) @>>> 0 \\
@. @V{\nu_G}VV @V{\nu_G}VV @V{\nu_H}VV @. \\
0 @>>> MU_G @>e(\beta^n)>> MU_G @>>> MU_H @>>> 0 
\end{CD}$
\end{center}

\noindent The top row is clearly exact and the bottom row is exact by Theorem 1.2 in \cite{computation of MU}. The result then follows from part (1) and the five Lemma.
\end{proof}

To generalize our results to split torus of higher rank, we need to assume a certain property holds in $\lazard_G$, which is justified by the following generalization of Theorem 1.2 in \cite{computation of MU}. Suppose $H$ is a compact abelian Lie group. We call a set of $H$-characters $\beta_1, \ldots, \beta_s$ linearly independent if their corresponding torus part (as in $\Z^r$, where $r$ is the rank of $H^*$) are $\Z$-linearly independent.

\begin{prop}
\label{prop gen thm by Sinha}
Suppose $H$ is a compact abelian Lie group. Any set of linearly independent $H$-characters $\beta_1, \ldots, \beta_s$ defines a regular sequence $e(\beta_1), \ldots, e(\beta_s)$ in $MU_H$. Moreover, $MU_H / (e(\beta_1), \ldots, e(\beta_s)) \cong MU_{H'}$ where $H' \subset H$ is the closed subgroup corresponding to $H^* / (\beta_1, \ldots, \beta_s)$.
\end{prop}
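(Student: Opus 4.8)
The plan is to deduce the general case from the torus case --- Theorem 1.2 in \cite{computation of MU} --- by presenting $MU_H$ as a quotient of $MU$ of a torus. Since $H^*$ is a finitely generated abelian group, fix a surjection $\pi : \Z^n \twoheadrightarrow H^*$, with kernel $L$ free of rank $t = n - \rank{H^*}$; Pontryagin duality turns $\pi$ into a closed embedding $H \embed \tilde{T}$, where $\tilde{T}$ is the torus with $\tilde{T}^* = \Z^n$ and the restriction map $\tilde{T}^* \to H^*$ is $\pi$. Choose a basis $\chi_1, \dots, \chi_t$ of $L$: these are linearly independent characters of $\tilde{T}$ with $\tilde{T}^*/(\chi_1, \dots, \chi_t) = \tilde{T}^*/L \cong H^*$, so Theorem 1.2 in \cite{computation of MU}, applied to $\tilde{T}$, shows that the restriction map induces an isomorphism $MU_{\tilde{T}}/(e(\chi_1), \dots, e(\chi_t)) \cong MU_H$.

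Now let $\beta_1, \dots, \beta_s$ be linearly independent characters of $H$ and lift each to a character $\tilde{\beta}_i$ of $\tilde{T}$. Tensoring $0 \to L \to \tilde{T}^* \to H^* \to 0$ with $\Q$ and using that the torus parts of the $\beta_i$ are $\Z$-linearly independent, one sees that $\chi_1, \dots, \chi_t, \tilde{\beta}_1, \dots, \tilde{\beta}_s$ are linearly independent characters of $\tilde{T}$. Applying Theorem 1.2 in \cite{computation of MU} once more, to $\tilde{T}$ and this family listed with the $\chi_i$ first, $e(\chi_1), \dots, e(\chi_t), e(\tilde{\beta}_1), \dots, e(\tilde{\beta}_s)$ is a regular sequence in $MU_{\tilde{T}}$ and $MU_{\tilde{T}}/(e(\chi_\bullet), e(\tilde{\beta}_\bullet)) \cong MU_{H'}$, where $H'$ is the closed subgroup of $H$ (equivalently of $\tilde{T}$) with character group $\tilde{T}^*/(\chi_\bullet, \tilde{\beta}_\bullet) \cong H^*/(\beta_1, \dots, \beta_s)$ --- exactly the group in the statement. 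Since the restriction map $MU_{\tilde{T}} \to MU_H$ sends each $e(\tilde{\beta}_i)$ to $e(\tilde{\beta}_i|_H) = e(\beta_i)$ and the $e(\chi_i)$ come first in the regular sequence, it follows that $e(\beta_1), \dots, e(\beta_s)$ is a regular sequence in $MU_H = MU_{\tilde{T}}/(e(\chi_\bullet))$ and that $MU_H/(e(\beta_1), \dots, e(\beta_s)) \cong MU_{H'}$.

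The step I expect to need the most care is the use of Theorem 1.2 in \cite{computation of MU} for a family of several characters of the torus $\tilde{T}$ rather than a single Euler class: one needs that a linearly independent family of non-trivial characters of a torus produces a genuine regular sequence of Euler classes, in the order needed, with quotient $MU$ of the expected subgroup --- even though the intermediate subgroups $\bigcap_{j \le i} \ker(\chi_j)$ are in general not tori. If only the single-Euler-class form of that theorem is available, one bootstraps to the family version using that $MU$ of a compact abelian Lie group is concentrated in even degrees, so that homogeneous regular sequences may be built up and reordered freely. Granting this torus input, the rest --- the rank count over $\Q$, the identification of the successive quotient groups through their character lattices, and the naturality of Euler classes under restriction --- is routine. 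One may equally run the argument as an induction on $s$ directly over $H$, replacing $H$ by $H_1$ with $H_1^* = H^*/(\beta_1)$ and observing that $\bar{\beta}_2, \dots, \bar{\beta}_s$ remain linearly independent, hence non-trivial, in $H_1^*$.
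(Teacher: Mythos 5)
Your closing remark is, in fact, the paper's entire proof: the proposition is proved by induction on $s$ directly over $H$, with the case $s=1$ (a single character whose torus part is non-trivial, over an arbitrary compact abelian Lie group) taken from the proof of Theorem 1.2 in \cite{computation of MU}, and with exactly the observation you state last, namely that the images of $\beta_2,\ldots,\beta_s$ in $H^*/(\beta_1)$ still have $\Z$-linearly independent torus parts. So your alternative route coincides with the intended argument, and the lattice-theoretic verifications you call routine (linear independence of the lifted family over $\Q$, the identification of $\tilde{T}^*/(\chi_1,\ldots,\chi_t,\tilde{\beta}_1,\ldots,\tilde{\beta}_s)$ with $H^*/(\beta_1,\ldots,\beta_s)$, compatibility of Euler classes with restriction) are indeed the only other ingredients; the paper leaves them implicit.

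Your primary route, through an ambient torus $\tilde{T}$, has its weak point exactly where you predict, and the patch you propose does not repair it. The ``family version over $\tilde{T}$'' that you invoke twice is not Theorem 1.2 of \cite{computation of MU}, which concerns a single Euler class; it is the special case $H=\tilde{T}$ of the proposition being proved. Moreover it cannot be reached from the single-character statement for tori alone: already after dividing by $e(\chi_1)$ one is working with $MU$ of $\ker\chi_1$, which fails to be a torus whenever $\chi_1$ is imprimitive (e.g.\ $\ker(2)\cong\Z/2\subset S^1$, and imprimitivity is unavoidable as soon as $H^*$ has torsion), so the iteration is forced through general compact abelian Lie groups at the first step. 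Evenness of $MU_H$ and permutability of regular sequences are beside the point here: no reordering is needed (the order $\chi$'s-then-$\beta$'s is the inductive order), and ``reordered freely'' is in any case not automatic in this non-Noetherian setting. The input genuinely required is the one-character statement for an arbitrary compact abelian Lie group and a character non-trivial on the identity component --- precisely what the paper extracts from Sinha's proof --- and once that is granted, the embedding $H\hookrightarrow\tilde{T}$ buys nothing over the direct induction on $s$ over $H$ that you sketch at the end.
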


\begin{proof}
By induction on $s$, see the proof of Theorem 1.2 in \cite{computation of MU} for the case when $s=1$.
\end{proof}

\begin{thm}
\label{thm lazard to MU iso}
Suppose $G$ is a split torus of rank $r \geq 2$. Assume the analogue of Proposition \ref{prop gen thm by Sinha} holds for $\lazard_G$ when $s \leq 2$. Then the canonical map $\nu_G : \lazard_G \to MU_G$ is an isomorphism.
\end{thm}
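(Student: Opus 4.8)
The plan is to prove injectivity and surjectivity separately, bootstrapping in both cases from the rank-one results of this section and from Proposition~\ref{prop gen thm by Sinha}, and using the assumed property of $\lazard_G$ only to transport those statements down to quotients of $G$. Injectivity will be immediate: by Proposition~\ref{prop lazard to MU inj} the kernel of $\nu_G$ is $K_{G,S}$, and since $G$ is a torus every non-trivial character is linearly independent as a one-element set, so the $s=1$ case of the hypothesis makes each non-trivial Euler class $e(\gamma)$ a non-zero-divisor in $\lazard_G$; a product of non-zero-divisors is a non-zero-divisor, hence every element of $S_G$ is a non-zero-divisor, $\lazard_G \to S_G^{-1}\lazard_G$ is injective, and $K_{G,S}=0$. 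Writing $M \defeq \operatorname{coker}\nu_G$, it then remains to show $M=0$.

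For surjectivity I would follow the strategy of Proposition~\ref{prop surj of map equi lazard to equi MU}. As there, the isomorphism $S_G^{-1}\nu_G$ (Proposition~13.2 in \cite{equi FGL 2}) together with regularity of Euler classes in $MU_G$ shows that every element of $M$ is annihilated by some finite product of non-trivial Euler classes. Fix an enumeration $\gamma^{(1)},\gamma^{(2)},\dots$ of the non-trivial characters of $G$ arranged so that, before any non-primitive character $\gamma^{(m)}=d\gamma_0$, the characters $\gamma_0,2\gamma_0,\dots,(d-1)\gamma_0$ have already appeared (a mild constraint), and let $S_m$ be the multiplicative set generated by $e(\gamma^{(1)}),\dots,e(\gamma^{(m)})$. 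I would prove by induction on $m$ that $M \to S_m^{-1}M$ is injective; granting this, any $a\in M$ is annihilated by some $\prod_{i\in I}e(\gamma^{(i)})$ with $I$ finite, hence dies in $S_m^{-1}M$ for $m\ge\max I$, so $a=0$ and $M=0$.

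For the inductive step, localize the obvious $e(\gamma^{(m)})$-diagram at $S_{m-1}$ and apply the Snake Lemma:
\begin{center}
$\begin{CD}
0 @>>> S_{m-1}^{-1}\lazard_G @>e(\gamma^{(m)})>> S_{m-1}^{-1}\lazard_G @>>> S_{m-1}^{-1}\lazard_{G_m} @>>> 0 \\
@. @V{\nu_G}VV @V{\nu_G}VV @V{\nu_{G_m}}VV @. \\
0 @>>> S_{m-1}^{-1}MU_G @>e(\gamma^{(m)})>> S_{m-1}^{-1}MU_G @>>> S_{m-1}^{-1}MU_{G_m} @>>> 0
\end{CD}$
\end{center}
where $G_m$ is the subgroup corresponding to $G^*/(\gamma^{(m)})$: the top row is exact by the $s=1$ case of the hypothesis ($e(\gamma^{(m)})$ regular, $\lazard_G/(e(\gamma^{(m)}))\cong\lazard_{G_m}$) and the bottom by Proposition~\ref{prop gen thm by Sinha}, and since $\ker\nu_G=0$ the Snake Lemma gives an exact sequence
$$0\longrightarrow\ker(S_{m-1}^{-1}\nu_{G_m})\longrightarrow S_{m-1}^{-1}M\xrightarrow{\ e(\gamma^{(m)})\ }S_{m-1}^{-1}M\longrightarrow\operatorname{coker}(S_{m-1}^{-1}\nu_{G_m})\longrightarrow 0,$$
so it is enough to show $S_{m-1}^{-1}\nu_{G_m}$ is injective, i.e.\ that $\ker\nu_{G_m}$ is killed by $S_{m-1}$. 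When $\gamma^{(m)}$ is primitive, $G_m$ is a split torus of rank $r-1$ and this is clear: every non-trivial character $\delta$ of $G_m$ lifts to $\tilde\delta\in G^*$ with $\{\gamma^{(m)},\tilde\delta\}$ linearly independent, so by the $s=2$ case of the hypothesis $e(\gamma^{(m)}),e(\tilde\delta)$ is a regular sequence and $e(\delta)$ is a non-zero-divisor in $\lazard_{G_m}=\lazard_G/(e(\gamma^{(m)}))$; hence $K_{G_m,S}=0$, so $\ker\nu_{G_m}=0$ by Proposition~\ref{prop lazard to MU inj} applied to the torus $G_m$ (and one even has $\nu_{G_m}$ an isomorphism, being surjective by Proposition~\ref{prop surj of map equi lazard to equi MU} when $r-1=1$ and by the same scheme in rank $r-1$ otherwise).

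The non-primitive characters are where I expect the real difficulty, and this is precisely the analogue of the delicate iteration through $e(\beta),e(\beta^2),\dots$ in the proof of Proposition~\ref{prop surj of map equi lazard to equi MU}: if $\gamma^{(m)}=d\gamma_0$ is not primitive then $G_m$ acquires a torsion part $\mu_d$, its torsion Euler classes need not be regular in $MU_{G_m}$, and Proposition~\ref{prop lazard to MU inj} is no longer available verbatim, so one cannot simply conclude $\ker\nu_{G_m}=0$. The idea would be to use that the chosen enumeration has already inverted the torsion Euler classes of $\lazard_{G_m}$ (those coming from $e(\gamma_0),\dots,e((d-1)\gamma_0)$) inside $S_{m-1}^{-1}\lazard_{G_m}$, and then to compare $\nu_{G_m}$ with $\nu_{\mu_d}$ along a regular sequence of Euler classes spanning the torus directions of $G_m$ — identifying the successive quotients with the Lazard rings of the corresponding subquotient groups by the hypothesis — using that $\nu_{\mu_d}$ is an isomorphism by Corollary~\ref{cor greenlee conj for rank 1} (together with $K_{S^1,S}=0$). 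Making the localizations at $S_{m-1}$ and this comparison fit together so that $\ker\nu_{G_m}$ is indeed killed by $S_{m-1}$ is the crux, and is exactly where the structural hypothesis on $\lazard_G$ — beyond its $s=1$ part — gets used; for $r=2$ this descent consumes precisely the $s\le 2$ hypothesis, and the general case follows the same pattern.
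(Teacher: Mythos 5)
Your injectivity argument and your primitive-character step are fine and are in the spirit of the paper's proof (Snake Lemma on a localized sequence, exactness of the rows from the $s=1$ hypothesis and Proposition \ref{prop gen thm by Sinha}, and the $s=2$ hypothesis to make Euler classes in directions independent of $\beta$ regular in the quotient Lazard ring). But the proof is not complete: the non-primitive case, which you yourself flag as ``the crux,'' is exactly the part that is left as a sketch, and the route you sketch does not work as stated. Corollary \ref{cor greenlee conj for rank 1}(2) identifies $MU_{\mu_d}$ with $\lazard_{\mu_d}/p(K_{G',S})$ for a rank-one torus $G'$, not with $\lazard_{\mu_d}$, so it does not by itself give control of $\kernel{\nu_{G_m}}$; moreover, your proposed descent from $G_m\cong \mu_d\times(\text{torus of rank }r-1)$ down to $\mu_d$ would require quotienting $\lazard_G$ by up to $r$ Euler classes, i.e.\ the hypothesis for $s\le r$, whereas the theorem assumes only $s\le 2$ (so ``the general case follows the same pattern'' is not justified for $r\ge 3$). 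Finally, your global induction requires $S_{m-1}^{-1}\nu_{G_m}$ to be injective for \emph{every} character in the enumeration, including every non-primitive one, so the gap is not an isolated corner case.

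The paper avoids this entirely by a minimality trick rather than an enumeration. One takes a nonzero $x$ in the cokernel $M$ killed by a single Euler class, applies an automorphism of $G$ to write the annihilating character as $\beta=(n,0,\ldots,0)$, and chooses the pair with $n$ minimal. Letting $T$ be the multiplicative set generated by $e(\gamma)$ for $\gamma=(a_1,0,\ldots,0)$ with $0<a_1<n$, minimality of $n$ guarantees that $x$ survives in $T^{-1}M$ while $e(\beta)x=0$ there. The point is that $T$ inverts precisely the ``torsion'' Euler classes of $H$ (the subgroup corresponding to $G^*/(\beta)$), so one never needs injectivity of $\nu_H$ itself, only of $T^{-1}\nu_H$; and that follows because the remaining non-trivial Euler classes of $H$ (those $e(\gamma)$ with some $a_i\neq 0$, $i\ge 2$, forming the set $T'$) are regular in $\lazard_H=\lazard_G/(e(\beta))$ by the $s=2$ hypothesis, so $T^{-1}\lazard_H\hookrightarrow T^{-1}{T'}^{-1}\lazard_H=S_H^{-1}\lazard_H$, and $S_H^{-1}\nu_H$ is an isomorphism by Proposition 13.2 of \cite{equi FGL 2}. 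The Snake Lemma then makes $e(\beta)$ injective on $T^{-1}M$, a contradiction. This is the idea missing from your write-up: by normalizing the annihilating character and inverting the smaller ``torsion directions'' first, one never has to prove anything about $\nu$ for subgroups with torsion, and only $s\le 2$ is ever used, for any rank $r$. Without this (or an equivalent device), your proposal does not yet prove the theorem.
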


\begin{proof}
By Proposition \ref{prop lazard to MU inj} and our assumption ($s=1$), it is enough to show the surjectivity. Let $M \defeq MU_G / \lazard_G$. Suppose $\nu_G$ is not surjective. Since $S_G^{-1} M = 0$ (by Proposition 13.2 in \cite{equi FGL 2}), there is an element $0 \neq x \in M$ and a non-trivial $G$-character $\beta$ such that $e(\beta)x=0$ in $M$. By an automorphism of $G$, we may assume $\beta = (n, 0 , \ldots, 0) \in \Z^r \cong G^*$ where $n > 0$. Let $n, x$ be such a pair with minimal $n$.

Let $T \subset \lazard_G$ be the multiplicative set generated by $e(\gamma)$ where $\gamma = (a_1, 0, \ldots, 0)$ and $0 < a_1 < n$. By the minimality of $n$, we have $0 \neq x \in T^{-1}M$ and $e(\beta)x=0$ in $T^{-1}M$. Consider the following commutative diagram of $\lazard_G$-modules :

\medskip

\begin{center}
$\begin{CD}
 @. T^{-1}\lazard_G @>{e(\beta)}>> T^{-1}\lazard_G @>>> T^{-1}\lazard_H @>>> 0 \\
 @. @V{T^{-1}\nu_G}VV @V{T^{-1}\nu_G}VV @V{T^{-1}\nu_H}VV @. \\
0 @>>> T^{-1}MU_G @>{e(\beta)}>> T^{-1}MU_G @>>> T^{-1}MU_H @. 
\end{CD}$
\end{center}

\medskip

\noindent where $H \subset G$ is the closed subgroup corresponding to $G^* / (\beta)$. The rows are exact as before. By the Snake Lemma, the following sequence is exact :
$$\kernel{T^{-1}\nu_H} \to T^{-1}M \stackrel{e(\beta)}{\longto} T^{-1}M.$$
Therefore, it is enough to show $T^{-1}\nu_H$ is injective.

Let $T' \subset \lazard_G$ be the multiplicative set generated by $e(\gamma)$ where $\gamma = (a_1, \ldots, a_r)$ such that at least one of $a_2, \ldots, a_r$ is non-zero. Consider the following commutative diagram :

\squarediagramword{T^{-1}\lazard_H}{T^{-1}MU_H}{T^{-1}{T'}^{-1}\lazard_H}{T^{-1}{T'}^{-1}MU_H}{T^{-1}\nu_H}{}{}{}

\noindent By our assumption ($s=2$), $\lazard_H \to {T'}^{-1} \lazard_H$ is injective, so is the left column. Notice that $T^{-1}{T'}^{-1}\lazard_H = S_H^{-1} \lazard_H$ and the same for $MU_H$. By Proposition 13.2 in \cite{equi FGL 2}, the bottom row is an isomorphism. Hence, $T^{-1}\nu_H$ is injective and we are done.
\end{proof}

\begin{cor}
\label{cor greenlee conj for higher rank}
Suppose $G$ is a split torus of rank $r \geq 2$. Assume the analogue of Proposition \ref{prop gen thm by Sinha} holds for $\lazard_G$ when $s \leq 2$. Then, for any closed subgroup $H \subset G$, the canonical map $\nu_H : \lazard_H \to MU_H$ is an isomorphism.
\end{cor}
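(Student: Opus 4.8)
The plan is to deduce this from Theorem \ref{thm lazard to MU iso} by transporting the isomorphism $\nu_G$ along the presentations of $\lazard_H$ and $MU_H$ as quotients of $\lazard_G$ and $MU_G$ by Euler classes, using only the functoriality of $\lazard_{(-)}$, $MU_{(-)}$ and $\nu$ under restriction to closed subgroups. First I would pick convenient generators for the kernel $N$ of $G^* \to H^*$: since $G$ is a split torus, $G^* \cong \Z^r$, and by the Smith normal form there is a $\Z$-basis $e_1, \dots, e_r$ of $\Z^r$ and positive integers $d_1, \dots, d_k$ with $N = \grpgenerated{d_1 e_1, \dots, d_k e_k}$. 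Set $\beta_i = d_i e_i$. Then $\beta_1, \dots, \beta_k$ is a linearly independent family of $G$-characters in the sense of Section \ref{sec canonical ring homo} (their torus parts are the $\Z$-linearly independent vectors $d_1 e_1, \dots, d_k e_k$), and each $\beta_i$ restricts to the trivial character of $H$.

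By Proposition \ref{prop gen thm by Sinha} the restriction map $MU_G \to MU_H$ induces an isomorphism $MU_G/(e(\beta_1), \dots, e(\beta_k)) \iso MU_H$. On the Lazard side, let $p \colon \lazard_G \to \lazard_H$ be the canonical map; it is surjective (every $H$-character lifts to a $G$-character, so every generator of $\lazard_H$ in the presentation of \cite{equi FGL} is the image of a generator of $\lazard_G$) and kills each $e(\beta_i)$, since $\beta_i$ is trivial on $H$. Consider the square

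\squarediagramword{\lazard_G}{\lazard_H}{MU_G}{MU_H}{p}{\nu_G}{\nu_H}{}

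\noindent which commutes by naturality of $\nu$. By Theorem \ref{thm lazard to MU iso}, $\nu_G$ is an isomorphism. A diagram chase then pins down $\kernel p$: if $x \in \kernel p$, then $\nu_G(x)$ dies in $MU_H$, hence $\nu_G(x) \in (e(\beta_1), \dots, e(\beta_k))$; since $\nu_G$ is an isomorphism sending $e(\beta_i)$ to $e(\beta_i)$, this forces $x \in (e(\beta_1), \dots, e(\beta_k))$, and the reverse inclusion is clear. So $p$ identifies $\lazard_H$ with $\lazard_G/(e(\beta_1), \dots, e(\beta_k))$, and under this identification $\nu_H$ is the composite of the isomorphism $\lazard_G/(e(\beta_1), \dots, e(\beta_k)) \iso MU_G/(e(\beta_1), \dots, e(\beta_k))$ induced by $\nu_G$ with the isomorphism above; hence $\nu_H$ is an isomorphism.

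The substantive inputs are just Theorem \ref{thm lazard to MU iso} and Proposition \ref{prop gen thm by Sinha}; the rest is formal. I expect the only points requiring care to be the commutativity of the displayed square and the surjectivity of $p$, i.e. correctly invoking the functorial description of the equivariant Lazard ring from \cite{equi FGL}, \cite{equi FGL 2}. An equivalent alternative would be to interpolate the chain of subgroups cut out by $\beta_1, \dots, \beta_j$ and apply the five lemma one character at a time, just as in the proof of Corollary \ref{cor greenlee conj for rank 1}(2); that route, however, seems to need the corresponding Euler-class quotient description of $\lazard$ for the intermediate (non-torus) subgroups as well, which is why I would favour the argument above.
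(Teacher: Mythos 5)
Your argument is correct, and it takes a mildly but genuinely different route from the paper's. The paper kills one character at a time: for $H$ cut out by a single character $\beta$ it writes the two exact rows $\lazard_G \stackrel{e(\beta)}{\to} \lazard_G \to \lazard_H \to 0$ and $MU_G \stackrel{e(\beta)}{\to} MU_G \to MU_H \to 0$ (the first "as before", i.e.\ quoting the Lazard-ring analogue of Sinha's quotient description $\lazard_G/(e(\beta)) \cong \lazard_H$, the second from Proposition \ref{prop gen thm by Sinha}), deduces that $\nu_H$ is an isomorphism from Theorem \ref{thm lazard to MU iso} by a diagram argument, and then handles a general closed subgroup by iterating this through the intermediate (generally non-torus) subgroups. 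You instead treat all the Smith-normal-form characters $\beta_1,\dots,\beta_k$ at once, invoke Proposition \ref{prop gen thm by Sinha} with $s=k$ for the identification $MU_G/(e(\beta_1),\dots,e(\beta_k)) \iso MU_H$ via restriction, and crucially you do \emph{not} assume the Lazard-side quotient description: you derive $\ker p = (e(\beta_1),\dots,e(\beta_k))$ from the bijectivity of $\nu_G$ together with the commutative square, needing only the surjectivity of $p$ (which your structure-constant argument, or Theorem 16.1 of \cite{equi FGL}, justifies) and the naturality of $\nu$ — both of which the paper also uses implicitly. What each approach buys: yours avoids exactly the point you flag at the end, namely the Euler-class quotient presentation of $\lazard$ at the intermediate subgroups, which is the unstated input behind the paper's "exact as before" and its "repeated applications"; the paper's induction, on the other hand, only ever needs one Euler class at a time and gives the exactness of the Lazard rows as an input rather than a byproduct. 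Your version arguably yields slightly more in passing (it proves $\lazard_H \cong \lazard_G/(e(\beta_1),\dots,e(\beta_k))$ under the standing hypotheses), and both rest on the same substantive inputs, Theorem \ref{thm lazard to MU iso} and Proposition \ref{prop gen thm by Sinha}.
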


\begin{proof}
Suppose $H \subset G$ is the closed subgroup corresponding to $G^* / (\beta)$ for some non-trivial $G$-character $\beta = (n, 0, \ldots, 0) \in \Z^r \cong G^*$. Consider the following commutative diagram :

\medskip

\begin{center}
$\begin{CD}
\lazard_G @>{e(\beta)}>> \lazard_G @>>> \lazard_H @>>> 0 \\
@V{\nu_G}VV @V{\nu_G}VV @V{\nu_H}VV @. \\
MU_G @>{e(\beta)}>> MU_G @>>> MU_H @>>> 0 
\end{CD}$
\end{center}

\medskip

\noindent The top row is exact as before and the bottom row is exact by Proposition \ref{prop gen thm by Sinha}. By Theorem \ref{thm lazard to MU iso}, the left and middle columns are isomorphisms, then so is the right column. The result for arbitrary $H$ can be shown by repeated applications of the above argument.
\end{proof}

\begin{rmk}
{\rm
By Corollary \ref{cor greenlee conj for rank 1} and \ref{cor greenlee conj for higher rank}, if $G$ is a split torus of rank $r$, $H \subset G$ is a closed subgroup and we assume the analogue of Proposition \ref{prop gen thm by Sinha} holds for $\lazard_G$ when $s \leq 2$ in the case when $r \geq 2$, then the canonical map $\nu_H : \lazard_H \to MU_H$ is surjective. By Theorem 16.1 in \cite{equi FGL}, $\lazard_H$ is generated by the Euler classes $e(\beta)$ and the structure constant $f^1_{ij}$. Therefore, $MU_H$, as a ring, is generated by the Euler classes $e(\beta)$ and $\nu_H(f^1_{ij})$. For a more explicit description of $\nu_H(f^1_{ij})$, see remark 11.4 in \cite{universal equi alg cobor}.
}
\end{rmk}

\bigskip

\section{The completion map}
\label{sec completion map}

Our main goal in this section is to show that the completion map 
$$MU_G \to \widehat{MU}_G \cong MU(BG),$$
with respect to the ideal generated by the Euler classes, is injective. Since we will consider some algebraic cobordism theories, we need to fix a ground field $k$. In this section, all schemes (or $G$-schemes) are over $k$. We will also assume $\char{k} = 0$.

Since there is a ring \homo\ $MU \to MU_G$, one would expect to also have a ring \homo\ $\lazard \to \lazard_G$. But it is not clear how to put a non-\equi\ \fgl\ over $\lazard_G$. Therefore, we will consider the algebraic cobordism rings instead. 

Let $\Omega(-)$ be the algebraic cobordism theory defined in \cite{universal alg cobor}, $\omega(-)$ be the algebraic cobordism theory defined by double point relation, as in \cite{alg cobor by DPR}, $\Omega^G(-)$ be the \equi\ algebraic cobordism theory defined in \cite{universal equi alg cobor} and $\Omega$, $\omega$, $\Omega^G$ be the corresponding cobordism rings over $\spec{k}$. By Theorem 1 in \cite{alg cobor by DPR}, $\omega(-) \cong \Omega(-)$. Also, the canonical ring \homo\ $\lazard \to \Omega$ is an isomorphism, by Theorem 1.2.7 in \cite{universal alg cobor}. 

Since double point relation holds in $\Omega^G(-)$ (Proposition 5.5 in \cite{universal equi alg cobor}), we have a canonical ring \homo 
$$\omega \to \Omega^G$$
(given by putting trivial $G$-action) as desired.

Let $\Omega^G_{\rm Tot}(-)$ be the \equi\ algebraic cobordism theory defined in \cite{homo equi alg cobor}. By Proposition 8.5 in \cite{universal equi alg cobor}, there is a ring \homo\ 
$$\Omega^G \to \Omega^G_{\rm Tot}.$$ 
Moreover, by subsection 3.3.1 in \cite{homo equi alg cobor}, 
$$\Omega^G_{\rm Tot} \cong \Omega[[e_1, \ldots, e_r]],$$
where $\beta_1, \ldots, \beta_r$ is a set of generator of $G^*$ and $e_i$ corresponds to the Euler class $e(\beta_i) = c(\beta_i)[\id_{\pt}]$.

\begin{lemma}
\label{lemma composition preserve Lazard}
The composition
$$\lazard \cong \omega \to \Omega^G \to \Omega^G_{\rm Tot} \cong \omega[[e_1, \ldots, e_r]] \cong \lazard[[e_1, \ldots, e_r]]$$
preserves elements in $\lazard$.
\end{lemma}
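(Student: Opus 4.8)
The plan is to track a general element $a \in \lazard$ through each arrow in the composition and check it lands back in $\lazard \subset \lazard[[e_1,\dots,e_r]]$ as the constant power series $a$. The key observation is that each of the three maps is a map of cobordism \emph{theories} evaluated at $\spec k$, and is therefore compatible with the structure maps from the coefficient rings; so the composition is forced, on the image of $\lazard$, to be the canonical inclusion.

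First I would recall that the identification $\lazard \cong \omega$ (via Theorem 1 of \cite{alg cobor by DPR} together with Theorem 1.2.7 of \cite{universal alg cobor}) is the classical Levine--Morel/Levine--Pandharipande statement that $\omega(\spec k) = \Omega(\spec k)$ is the Lazard ring, with the Lazard-ring structure coming from the universal \fgl. Next, the map $\omega \to \Omega^G$ is ``put the trivial $G$-action'', i.e. it is induced by the exact functor $X \mapsto X$ with trivial action from (double-point) cobordism cycles to \equi\ cobordism cycles; in particular it sends the class $[Z \to \spec k]$ of a variety to the class of the same variety with trivial $G$-action, hence it commutes with the respective maps from $\omega$ (indeed $\omega$ maps to $\Omega^G$ by definition this way). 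Then the map $\Omega^G \to \Omega^G_{\rm Tot}$ is the ring \homo\ of Proposition 8.5 in \cite{universal equi alg cobor}; again this is a natural transformation of \equi\ cobordism theories, so on coefficient rings over $\spec k$ it is compatible with the structure maps from $\Omega = \omega$. Finally, under the isomorphism $\Omega^G_{\rm Tot} \cong \Omega[[e_1,\dots,e_r]]$ of subsection 3.3.1 in \cite{homo equi alg cobor}, the copy of $\Omega$ sitting inside as the constant power series is precisely the image of the structure map from $\Omega$ (the $e_i$ are Euler classes of the generators of $G^*$, which die when one restricts to the trivial group / sets the equivariant parameters to zero).

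Putting these together: the composite $\lazard \cong \omega \to \Omega^G \to \Omega^G_{\rm Tot} \cong \lazard[[e_1,\dots,e_r]]$ is a ring \homo\ that, by the naturality statements above, sits in a commutative triangle with the canonical inclusion $\lazard \hookrightarrow \lazard[[e_1,\dots,e_r]]$ and the identity on $\lazard$; equivalently, composing further with the augmentation $e_i \mapsto 0$, i.e. $\lazard[[e_1,\dots,e_r]] \to \lazard$ (which corresponds to restricting $\Omega^G_{\rm Tot}$ to the trivial subgroup, or to the zero section), recovers the identity of $\lazard$. Since the image of $\lazard$ under the composite consists of constant power series (its elements come from classes of varieties with trivial $G$-action, which have no $e_i$-dependence — this is visible already at the level of $\Omega^G \to \Omega^G_{\rm Tot}$, where a trivial-action class maps to its image in the constant term $\Omega \subset \Omega[[e_1,\dots,e_r]]$), the augmentation is injective on that image, and hence the composite restricted to $\lazard$ equals the canonical inclusion. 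That is exactly the assertion.

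The main obstacle I expect is bookkeeping rather than conceptual: one must make sure that the three different models ($\omega$, $\Omega^G$, $\Omega^G_{\rm Tot}$) really are being compared via the \emph{canonical} maps from the common coefficient ring $\lazard = \Omega$, so that ``preserves elements in $\lazard$'' is a statement about a commuting triangle and not just about some abstract isomorphism. Concretely, the delicate point is pinning down the isomorphism $\Omega^G_{\rm Tot} \cong \Omega[[e_1,\dots,e_r]]$ from subsection 3.3.1 of \cite{homo equi alg cobor} and verifying that, under it, the canonical structure map $\Omega \to \Omega^G_{\rm Tot}$ is the inclusion of constants — equivalently that a trivial-action class $[Z]$ maps to $[Z] \in \Omega$ with all $e_i$-coordinates zero. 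Once that normalization is fixed, the rest is a formal diagram chase using the compatibility of each map with the maps from $\lazard$, together with injectivity of the augmentation $\lazard[[e_1,\dots,e_r]] \to \lazard$ on constant power series.
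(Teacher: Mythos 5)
Your proposal is correct and follows essentially the same route as the paper: the paper's proof consists precisely of the observation that the composition $\omega \to \Omega^G \to \Omega^G_{\rm Tot} \cong \omega[[e_1, \ldots, e_r]]$ preserves elements of $\omega$, i.e.\ that trivial-action classes land as constant power series, which is exactly the geometric normalization you spell out (in more detail) before transporting it across the identification $\lazard \cong \omega$.
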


\begin{proof}
The result follows from the fact that the composition
$$\omega \to \Omega^G \to \Omega^G_{\rm Tot} \cong \omega[[e_1, \ldots, e_r]]$$
preserves elements in $\omega$.
\end{proof}

Now, let $I \subset \lazard_G$ be the ideal generated by the Euler classes and $\hat{\lazard}_G$, $\hat{\Omega}^G$ be the $I$-adic completions of $\lazard_G$, $\Omega^G$ respectively. Since $\Omega^G_{\rm Tot}$ is $I$-adically complete, we have an induced ring \homo 
$$\hat{\Omega}^G \to \Omega^G_{\rm Tot}.$$

\begin{prop}
\label{prop completion map for equi alg cobor is inj}
Suppose $G$ is a split torus and $\char{k} = 0$.

\noindent \begin{statementslist}
{\rm (1)} & The ring \homo\  $\hat{\Omega}^G \to \Omega^G_{\rm Tot}$ is an isomorphism. \nonumber\\
{\rm (2)} & The completion map $\Omega^G \to \hat{\Omega}^G$ is injective. \nonumber
\end{statementslist}
\end{prop}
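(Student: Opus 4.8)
The plan is to prove (1) first, since (2) follows formally: once $\hat{\Omega}^G \to \Omega^G_{\rm Tot}$ is an isomorphism, the completion map $\Omega^G \to \hat{\Omega}^G$ is identified with the composite $\Omega^G \to \Omega^G_{\rm Tot} \cong \hat{\Omega}^G$, and injectivity of $\Omega^G \to \Omega^G_{\rm Tot}$ reduces to showing no nonzero element of $\Omega^G$ is killed there. For (1), I would exploit the presentation $\Omega^G_{\rm Tot} \cong \Omega[[e_1,\dots,e_r]] \cong \lazard[[e_1,\dots,e_r]]$ coming from subsection 3.3.1 of \cite{homo equi alg cobor}, together with the structural fact that $\Omega^G$ is generated over $\Omega \cong \lazard$ by the Euler classes $e(\beta)$ for all $G$-characters $\beta$ (compare the generation statement via Euler classes and structure constants used in the Remark after Corollary \ref{cor greenlee conj for higher rank}, and the analogous feature of $\Omega^G$ from \cite{universal equi alg cobor}). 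The map $\Omega^G \to \Omega^G_{\rm Tot}$ sends each $e(\beta_i)$ to $e_i$ and, more generally, each $e(\beta)$ for $\beta = \sum a_i \beta_i$ to the image of $e(\beta)$ under the formal-group-law expansion, namely $[a_1](e_1) +_{F} \dots +_{F} [a_r](e_r)$ expressed in $\lazard[[e_1,\dots,e_r]]$; by Lemma \ref{lemma composition preserve Lazard} the coefficient ring $\lazard \cong \omega$ is preserved. So the image of $\Omega^G$ contains $\lazard$ and all the $e_i$, hence is dense in $\lazard[[e_1,\dots,e_r]]$ for the $I$-adic (equivalently $(e_1,\dots,e_r)$-adic) topology.

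The argument for (1) then runs as follows. Surjectivity of $\hat{\Omega}^G \to \Omega^G_{\rm Tot}$: since $\Omega^G_{\rm Tot}$ is $I$-adically complete and the image of $\Omega^G$ is $I$-adically dense (it contains $\lazard$ and $e_1,\dots,e_r$, which topologically generate), the induced map from the completion is surjective. Injectivity: one must show that the kernel of $\Omega^G \to \Omega^G_{\rm Tot}$ becomes trivial after $I$-adic completion, i.e. $\ker(\Omega^G \to \Omega^G_{\rm Tot}) \subset \bigcap_n I^n \Omega^G$ and moreover that the $I$-adic filtration on $\Omega^G$ agrees with the one pulled back from $\Omega^G_{\rm Tot}$. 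Concretely, I would show that for each $n$, the composite $\Omega^G \to \Omega^G_{\rm Tot} \to \Omega^G_{\rm Tot}/I^n$ is surjective with kernel exactly $I^n \Omega^G$; this is where the explicit polynomial-truncation structure of $\Omega[[e_1,\dots,e_r]]/(e_1,\dots,e_r)^n$ is used. Granting that, $\hat{\Omega}^G = \lim_n \Omega^G/I^n \xrightarrow{\ \sim\ } \lim_n \Omega^G_{\rm Tot}/I^n = \Omega^G_{\rm Tot}$.

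For (2), note that $\Omega^G/(e(\beta_1),\dots,e(\beta_r)) \cong \Omega^{\{1\}} \cong \Omega \cong \lazard$ (setting all Euler classes to zero recovers the non-equivariant theory), and by iterating, the associated graded of $\Omega^G$ for the $I$-adic filtration injects into $\lazard[e_1,\dots,e_r]$, the associated graded of $\Omega^G_{\rm Tot}$. A nonzero $x \in \Omega^G$ has a well-defined leading term in ${\rm gr}_I \Omega^G$ provided $\bigcap_n I^n \Omega^G = 0$, and that leading term maps to a nonzero element of ${\rm gr}\,\Omega^G_{\rm Tot}$; hence $x$ maps to something nonzero in $\Omega^G_{\rm Tot}$, giving injectivity of $\Omega^G \to \Omega^G_{\rm Tot}$, and composing with the isomorphism from (1) gives injectivity of $\Omega^G \to \hat{\Omega}^G$.

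The main obstacle I anticipate is verifying the separatedness/filtration-compatibility claims for $\Omega^G$: namely that $\bigcap_n I^n \Omega^G = 0$ and that the $I$-adic filtration on $\Omega^G$ is carried isomorphically onto that of $\Omega^G_{\rm Tot}$ on associated graded pieces. This requires understanding the structure of $\Omega^G$ as an $\Omega$-module well enough to control the kernel and cokernel of $\Omega^G \to \Omega^G_{\rm Tot}$ in each filtration degree — essentially a finiteness/freeness statement at each truncation level. I would extract this from the construction in \cite{universal equi alg cobor} and \cite{homo equi alg cobor}, possibly by first checking it modulo $(e_1,\dots,e_r)$ (where it reduces to the non-equivariant isomorphism $\Omega \cong \lazard$) and then bootstrapping up the filtration by the five lemma applied to short exact sequences of truncations, in the same style as the Snake Lemma arguments used throughout section \ref{sec canonical ring homo}.
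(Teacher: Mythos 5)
There is a genuine gap, and it sits exactly at the heart of the proposition. Everything you establish honestly (the map $f:\lazard[e_1,\ldots,e_r]\to\Omega^G$, the fact that its composite into $\Omega^G_{\rm Tot}\cong\lazard[[e_1,\ldots,e_r]]$ is the evident one by Lemma \ref{lemma composition preserve Lazard}, density of the image) only bears on surjectivity of $\hat{\Omega}^G\to\Omega^G_{\rm Tot}$ -- and even there, density plus completeness of the target is not by itself enough; you need surjectivity on associated graded pieces, which does hold here since $\lazard$ and the $e(\beta_i)$ lie in the image, so that half is reparable. The injectivity half of (1) and all of (2) are deferred: you yourself list $\bigcap_n I^n\Omega^G=0$ and the identification of kernels of the truncations as ``the main obstacle.'' Worse, your sketch for (2) is circular -- a nonzero $x$ has a well-defined nonzero leading term in ${\rm gr}_I\,\Omega^G$ ``provided $\bigcap_n I^n\Omega^G=0$,'' but injectivity of $\Omega^G\to\hat{\Omega}^G$ \emph{is} the statement $\bigcap_n I^n\Omega^G=0$ -- and the structural inputs you want ($\Omega^G/(e(\beta_1),\ldots,e(\beta_r))\cong\Omega$, and by iteration an embedding ${\rm gr}_I\,\Omega^G\hookrightarrow\lazard[e_1,\ldots,e_r]$) amount to the Euler classes forming a regular sequence with the expected quotients. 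That is the analogue of Sinha's theorem (Proposition \ref{prop gen thm by Sinha}), which this paper cannot prove even for $\lazard_G$ and must \emph{assume} as a hypothesis when $r\geq 2$; nothing in \cite{universal equi alg cobor} or \cite{homo equi alg cobor} is available to give your level-wise claim that $\Omega^G\to\Omega^G_{\rm Tot}/I^n$ has kernel exactly $I^n\Omega^G$.

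The idea you are missing is to bring in $\lazard_G$ and sandwich, rather than analyze the filtration of $\Omega^G$ directly. By Theorem 6.12 in \cite{universal equi alg cobor} the map $\lazard_G\to\Omega^G$ is surjective, hence so is $\hat{\lazard}_G\to\hat{\Omega}^G$; by Theorem 6.5 in \cite{equi FGL 2}, $\hat{\lazard}_G\cong\lazard[[e_1,\ldots,e_r]]$. Using Lemma \ref{lemma composition preserve Lazard} one checks mod $I^n$ that the composite $\hat{\lazard}_G\to\hat{\Omega}^G\to\Omega^G_{\rm Tot}\cong\lazard[[e_1,\ldots,e_r]]\cong\hat{\lazard}_G$ is the identity, so a surjection followed by another map composes to an isomorphism, forcing both $\hat{\lazard}_G\to\hat{\Omega}^G$ and $\hat{\Omega}^G\to\Omega^G_{\rm Tot}$ to be isomorphisms; that is (1), with no regularity or associated-graded input about $\Omega^G$ needed. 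For (2) the paper then uses your polynomial subring: since $\lazard[e_1,\ldots,e_r]/(e_1,\ldots,e_r)^n\to\Omega^G/I^n\to\lazard[[e_1,\ldots,e_r]]/(e_1,\ldots,e_r)^n$ is the identity, the first map is injective, so $f^{-1}(I^n)=(e_1,\ldots,e_r)^n$ for all $n$, and an element of $\bigcap_n I^n$ is forced to vanish. So your overall shape (identify $\hat{\Omega}^G$ with $\lazard[[e_1,\ldots,e_r]]$, then deduce separatedness) matches the paper, but without the surjection $\lazard_G\twoheadrightarrow\Omega^G$ and Greenlees' computation of $\hat{\lazard}_G$ the decisive steps remain unproved.
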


\begin{proof}
For part (1), the map $\lazard \cong \omega \to \Omega^G$ induces a map $f : \lazard[e_1, \ldots, e_r] \to \Omega^G$, by sending $e_i$ to $e(\beta_i)$. So, we have
$$\lazard[e_1, \ldots, e_r] \stackrel{f}{\to} \Omega^G \to \hat{\Omega}^G \to \Omega^G_{\rm Tot} \cong \lazard[[e_1, \ldots, e_r]],$$
which is just the completion map \wrt\ the ideal $(e_1, \ldots, e_r)$ (by Lemma \ref{lemma composition preserve Lazard}).

By Theorem 6.12 in \cite{universal equi alg cobor}, the map $\lazard_G \to \Omega^G$ is surjective, so is $\hat{\lazard}_G \to \hat{\Omega}^G$. By Theorem 6.5 in \cite{equi FGL 2}, $\hat{\lazard}_G \cong \lazard[[e_1, \ldots, e_r]]$. Therefore, we have
$$\hat{\lazard}_G \to \hat{\Omega}^G \to \Omega^G_{\rm Tot} \cong \lazard[[e_1, \ldots, e_r]] \cong \hat{\lazard}_G,$$
which is the identity map (can be seen by considering its quotient by $I^n$). Hence, $\hat{\lazard}_G \to \hat{\Omega}^G$ is an isomorphism and so is $\hat{\Omega}^G \to \Omega^G_{\rm Tot}$.

For part (2), consider the composition
$$\lazard[e_1, \ldots, e_r] \stackrel{f}{\to} \Omega^G \to \hat{\Omega}^G \stackrel{g}{\cong} \lazard[[e_1, \ldots, e_r]]$$
($g$ is an isomorphism by part (1)). Since 
$$\lazard[e_1, \ldots, e_r] / (e_1, \ldots, e_r)^n \to \Omega^G / I^n \to \lazard[[e_1, \ldots, e_r]] / (e_1, \ldots, e_r)^n$$
is the identity map, the map $\lazard[e_1, \ldots, e_r] / (e_1, \ldots, e_r)^n \to \Omega^G / I^n$ is injective for all $n \geq 1$. Therefore, $f^{-1}(I^n) = (e_1, \ldots, e_r)^n$. If $a$ is an element in $\cap_{n \geq 1} I^n \subset \Omega^G$, then 
$$f^{-1}(a) \subset f^{-1}(\cap_n I^n) = \cap_n f^{-1}(I^n) = \cap_n (e_1, \ldots, e_r)^n = 0.$$ 
Hence, $a = 0$ and we are done.
\end{proof}

\begin{thm}
\label{thm completion map}
Suppose $G$ is a spit torus of rank $r$ and $k = \C$.

\noindent
\begin{statementslist}
{\rm (1)} & The canonical map $\Omega^G \to MU_G$ is injective. \nonumber\\
{\rm (2)} & The kernel of the canonical, surjective map $\lazard \to \Omega^G$ is $K_{G,S}$. \nonumber
\end{statementslist}

\medskip

If $r \geq 2$ and we assume the analogue of Proposition \ref{prop gen thm by Sinha} holds for $\lazard_G$ when $s \leq 2$, then

\noindent
\begin{statementslist}
{\rm (3)} & $\lazard_G \iso \Omega^G \iso MU_G$ \nonumber\\
{\rm (4)} & The completion maps $\lazard_G \to \hat{\lazard}_G$ and $MU_G \to \widehat{MU}_G$ are both injective. \nonumber\\
{\rm (5)} & $MU_G$ is an integral domain. \nonumber
\end{statementslist}

\medskip

If $r = 1$, then

\noindent
\begin{statementslist}
{\rm (3')} & $\Omega^G \iso MU_G$ \nonumber\\
{\rm (4')} & The completion map $MU_G \to \widehat{MU}_G$ is injective. \nonumber\\
{\rm (5')} & $MU_G$ is an integral domain. \nonumber
\end{statementslist}
\end{thm}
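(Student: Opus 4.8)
The plan is to assemble everything from the pieces already in hand, the key inputs being Proposition \ref{prop lazard to MU inj}, Proposition \ref{prop completion map for equi alg cobor is inj}, Theorem \ref{thm lazard to MU iso}, and the identification $\omega \cong \Omega$ together with the canonical ring homomorphism $\omega \to \Omega^G$ constructed above. First I would address (1): the composition $\Omega^G \to MU_G$ arises because the equivariant algebraic cobordism theory $\Omega^G(-)$, when realized over $k = \C$, maps to the topological theory $MU_G^*(-)$, compatibly with all the structure. To see injectivity, I would factor the completion map $\Omega^G \to \hat\Omega^G$ through $MU_G$: by Proposition \ref{prop completion map for equi alg cobor is inj}(2) the map $\Omega^G \to \hat\Omega^G$ is injective, and I would check that it factors as $\Omega^G \to MU_G \to \widehat{MU}_G \cong MU(BG)$, using that $\hat\Omega^G \cong \Omega^G_{\rm Tot} \cong \Omega[[e_1,\dots,e_r]] \cong MU[[e_1,\dots,e_r]]$ maps isomorphically onto $MU(BG)$ (the non-equivariant realization being an isomorphism by Quillen/Levine–Morel). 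Since the composite $\Omega^G \to \widehat{MU}_G$ is injective, so is the first factor $\Omega^G \to MU_G$, giving (1).

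For (2), I would use that $\lazard \to \Omega^G$ is surjective (Theorem 6.12 in \cite{universal equi alg cobor}, as already cited) and that the composite $\lazard_G \to \Omega^G \to MU_G$ equals $\nu_G$ up to the identifications. Actually the cleaner route: by (1), $\Omega^G \embed MU_G$, and by Proposition \ref{prop lazard to MU inj} the kernel of $\lazard_G \to MU_G$ is $K_{G,S}$; since $\lazard_G \to \Omega^G$ is surjective and $\Omega^G \to MU_G$ is injective, the kernel of $\lazard_G \to \Omega^G$ must also be exactly $K_{G,S}$. This proves (2) and in particular identifies $\Omega^G \cong \lazard_G/K_{G,S}$.

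For (3) and (3'), I would invoke Theorem \ref{thm lazard to MU iso} (resp.\ Corollary \ref{cor greenlee conj for rank 1} / Proposition \ref{prop surj of map equi lazard to equi MU}): under the stated hypotheses $\nu_G : \lazard_G \to MU_G$ is an isomorphism when $r \geq 2$, hence $K_{G,S} = 0$, so $\lazard_G \cong \Omega^G$ by (2) and $\Omega^G \cong MU_G$ by (1) combined with surjectivity of $\lazard_G \to \Omega^G$. When $r = 1$, $\nu_G$ is surjective by Proposition \ref{prop surj of map equi lazard to equi MU}, which together with (1) and the surjection $\lazard_G \to \Omega^G$ forces $\Omega^G \iso MU_G$, giving (3'). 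For (4) and (4'): the completion map $\lazard_G \to \hat\lazard_G$ is identified, via (3) and Proposition \ref{prop completion map for equi alg cobor is inj}(2), with $\Omega^G \to \hat\Omega^G$, which is injective; and $MU_G \to \widehat{MU}_G$ is then the same map under (3)/(3'), hence injective (in the rank-one case one uses $\Omega^G \iso MU_G$ from (3') in place of the full (3)). Finally (5) and (5'): by Theorem 6.5 in \cite{equi FGL 2}, $\hat\lazard_G \cong \lazard[[e_1,\dots,e_r]] \cong MU[[e_1,\dots,e_r]]$, which is an integral domain since $MU \cong \lazard$ is a polynomial ring over $\Z$; as $MU_G$ injects into this domain by (4)/(4'), it is itself a domain.

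The main obstacle I anticipate is (1): pinning down precisely that the topological realization $\Omega^G \to MU_G$ is compatible with the completion maps and with the non-equivariant realization $\Omega \cong MU$, i.e.\ that the square relating $\Omega^G \to MU_G$, the $I$-adic completions, and $\Omega^G_{\rm Tot} \to MU(BG)$ genuinely commutes and that the bottom identification $\Omega^G_{\rm Tot} \cong MU(BG)$ is the standard one. Once that diagram is in place the rest is a formal chase through the five lemma, the snake lemma, and the already-established isomorphisms.
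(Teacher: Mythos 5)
Your proposal matches the paper's proof in all essentials: part (1) via the commutative square comparing the completion maps, with $\hat{\Omega}^G \to \widehat{MU}_G$ identified as an isomorphism through $\Omega^G_{\rm Tot} \cong \hat{\lazard}_G \cong \widehat{MU}_G$ and injectivity of $\Omega^G \to \hat{\Omega}^G$ from Proposition \ref{prop completion map for equi alg cobor is inj}; part (2) from the surjectivity of $\lazard_G \to \Omega^G$, part (1), and Proposition \ref{prop lazard to MU inj}; and parts (3)--(5), (3')--(5') assembled from Theorem \ref{thm lazard to MU iso} (equivalently Corollary \ref{cor greenlee conj for higher rank}), Proposition \ref{prop surj of map equi lazard to equi MU}, and the fact that $\widehat{MU}_G \cong \lazard[[e_1,\ldots,e_r]]$ is a domain. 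This is essentially the same argument as the paper's, so no further comment is needed.
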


\begin{proof}
For part (1), consider the following commutative diagram :
\squarediagram{\Omega^G}{\hat{\Omega}^G}{MU_G}{\widehat{MU}_G}
The right column is the composition $\hat{\Omega}^G \to \Omega^G_{\rm Tot} \cong \hat{\lazard}_G \to \widehat{MU}_G$, which is an isomorphism (by part (1) of Proposition \ref{prop completion map for equi alg cobor is inj} and Proposition 13.3 in \cite{equi FGL 2}). The result then follows from part (2) of Proposition \ref{prop completion map for equi alg cobor is inj}. 

For part (2), notice that we have
$$\lazard_G \stackrel{a}{\to} \Omega^G \stackrel{b}{\to} MU_G$$
such that $a$ is surjective (Theorem 6.12 in \cite{universal equi alg cobor}) and $b$ is injective (by part (1)). The result then follows from Proposition \ref{prop lazard to MU inj}. 

Part (3) follows from Corollary \ref{cor greenlee conj for higher rank} and part (2). Part (4) follows from part (2) of Proposition \ref{prop completion map for equi alg cobor is inj} and part (3). Part (5) follows from part (4) and the fact that 
$$\widehat{MU}_G \cong \lazard[[e_1, \ldots, e_r]]$$ 
is an integral domain. Part (3'), (4'), (5') follow from similar arguments as those for part (3), (4), (5) respectively.
\end{proof}

\begin{rmk}
{\rm 
Many results are only true for split torus. Even for cyclic group of order $n$, $\lazard_G$ can behave quite badly (so is $MU_G$). For example, if $\beta \in G^*$ is a generator, $0 \neq e(\beta) \in \lazard_G$ is not regular (because $0 = e(\beta^n) = e(\beta)(n + a e(\beta))$ for some $a$). Moreover, if $n = 6$, one can show that $S_G^{-1} \hat{\lazard}_G$ is the zero ring. In particular, that means $\phi$ is not injective when $n = 6$, because $S_G^{-1} \lazard_G$ is never the zero ring by Corollary 6.4 in \cite{equi FGL 2}. 
}
\end{rmk}

\bigskip

\bigskip



\begin{thebibliography} {XYZUST}

\label{bib equi FGL}
\bibitem[CGK]{equi FGL} Cole, Michael ; Greenlees, J. P. C. ; Kriz, I. :
Equivariant formal group laws. Proc. London Math. Soc. (3) 81 (2000), no. 2, 355--386.

\label{bib complex cobor}
\bibitem[D]{complex cobor} tom Dieck, Tammo :
Bordism of G-manifolds and integrality theorems. Topology 9 1970 345--358.

\label{bib equi FGL 2}
\bibitem[G]{equi FGL 2} Greenlees, J. P. C. :
Equivariant formal group laws and complex oriented cohomology theories. Equivariant stable homotopy theory and related areas (Stanford, CA, 2000). Homology Homotopy Appl. 3 (2001), no. 2, 225--263.

\label{bib homo equi alg cobor}
\bibitem[HM]{homo equi alg cobor} Heller, J. ; Malag\'{o}n-L\'{o}pez, J. : 
Equivariant algebraic cobordism. Preprint. arXiv:1006.5509v1 [math.AG]

\label{bib universal equi alg cobor}
\bibitem[Li]{universal equi alg cobor} Liu, C. L. : 
Equivariant Algebraic Cobordism and Equivariant Formal Group Law. Preprint. arXiv:1305.2053v1 [math.AG]

\label{bib universal alg cobor}
\bibitem[LMo]{universal alg cobor} Levine, M. ; Morel, F. : 
Algebraic cobordism. Springer Monographs in Mathematics. Springer, Berlin, 2007. xii+244 pp.

\label{bib alg cobor by DPR}
\bibitem[LP]{alg cobor by DPR} Levine, M. ; Pandharipande, R. : 
Algebraic cobordism revisited. Invent. Math. 176 (2009), no. 1, 63--130.

\label{bib computation of MU}
\bibitem[S]{computation of MU} Sinha, Dev P. :
Computations of complex equivariant bordism rings. Amer. J. Math. 123 (2001), no. 4, 577--605.

\label{bib complex cobor ring when order two}
\bibitem[St]{complex cobor ring when order two} Strickland, N.P. :  
Complex cobordism of involutions. Geometry and Topology 5 (2001) 335--345.

\end{thebibliography}
\end{document}